\newtheorem{theorem}{Theorem}[section]
\newtheorem{proposition}[theorem]{Proposition}
\newtheorem{corollary}[theorem]{Corollary}
\newtheorem{lemma}[theorem]{Lemma}
\newtheorem{remark}{Remark}
\newtheorem{definition}[theorem]{Definition}
\begin{document}

\title[L-Dunford-Pettis property in Banach spaces]
    {L-Dunford-Pettis property in Banach spaces}

\author{A.~Retbi}
\address{Universit\'{e} Ibn Tofail, Facult\'{e} des Sciences, D\'{e}%
partement de Math\'{e}matiques, B.P. 133, K\'{e}nitra, Morocco}
\email{abderrahmanretbi@hotmail.com}

\author{B.~El~Wahbi}

\subjclass[2010]{46A40, 46B40, 46H42}
\date{01/03/2016;\ \ Revised 15/04/2016}
\keywords{Dunford-Pettis set, Dunford-Pettis relatively compact property,
Dunford-Pettis completely continuous operator.}

\begin{abstract}
  In this paper, we introduce and study the concept of
  L-Dunford-Pettis sets and L-Dunford-Pettis property in Banach
  spaces. Next, we give a characteriza\-tion of the L-Dunford-Pettis
  property with respect to some well-known geometric pro\-per\-ties of
  Banach spaces. Finally, some complementability of operators on
  Banach spaces with the L-Dunford-Pettis property are also
  investigated.
\end{abstract}

\maketitle

\section{Introduction and notation}
A norm bounded  subset $A$ of a Banach space $X$ is called
Dunford-Pettis (DP for short) if every weakly null sequence
$(f_{n})$ in $X^{\prime}$ converge uniformly to zero on $A$, that
is, $\lim_{n\rightarrow \infty }\sup_{x\in A}
\left|f_{n}(x)\right|=0$.

An operator $T$ between two Banach spaces $X$ and $Y$ is completely
continuous if T maps weakly null sequences into norm null ones.

Recall from \cite{Y}, that an operator $T:X\rightarrow Y$ between two
Banach spaces is Dunford-Pettis completely continuous (abb. $DPcc$) if
it carries a weakly null sequence, which is a DP set in $X$ to norm
null ones in $Y$. It is clear that every completely continuous
operator is DPcc. Also every weakly compact operator is DPcc (see
Corollary 1.1 of \cite{Y}).

A Banach space $X$ has:
\begin{itemize}
\item [--] a relatively compact Dunford-Pettis property (DPrcP for
  short) if every Dunford-Pettis set in $X$ is relatively compact
  \cite{E}. For example, every Schur spaces have the DPrcP.
\item [--] a Grothendieck property (or a Banach space $X$ is a
  Grothendieck space) if weak$^{\star}$ and weak convergence of
  sequences in $X^{\prime}$ coincide. For example, each reflexive
  space is a Grothendieck space.
\item [--] a Dunford-Pettis property (DP property for short) if every
  weakly compact operator $T$ from $X$ into another Banach space $Y$
  is completely continuous, equivalently, if every relatively weakly
  compact subset of $X$ is DP.
\item [--] a reciprocal Dunford-Pettis property (RDP property for
  short) if every completely continuous operator on $X$ is weakly
  compact.
\end{itemize}

A subspace $X_{1}$ of a Banach space $X$ is complemented if there
exists a projection $P$ from $X$ to $X_{1}$ (see page 9 of
\cite{B}).

Recall from \cite{A}, that a Banach lattice is a Banach space
$(E,\Vert \cdot \Vert )$ such that $E$ is a vector lattice and its
norm satisfies the following property: for each $x,y\in E$ such that
$|x|\leq |y|$, we have $\Vert x\Vert \leq \Vert y\Vert $.

We denote by $c_{0}$, $\ell^{1}$, and $\ell^{\infty}$ the Banach
spaces of all sequences converging to zero, all absolutely summable
sequences, and all bounded sequences, respectively.

Let us recall that a norm bounded subset $A$ of a Banach space
$X^{\prime}$ is called L-set if every weakly null sequence $(x_{n})$
in $X$ converge uniformly to zero on $A$, that is,
$\lim_{n\rightarrow \infty }\sup_{f\in A}
\left|f(x_{n})\right|=0$. Note also that a Banach space $X$ has the
RDP property if and only if every L-set in $X^{\prime}$ is relatively
weakly compact.

In his paper, G. Emmanuelle in \cite{EM} used the concept of L-set to
characterize Banach spaces not containing $\ell^{1}$, and gave several
consequences concerning Dunford-Pettis sets. Later, the idea of L-set
is also used to establish a dual characterization of the Dunford-Pettis
property \cite{G}.

The aim of this paper is to introduce and study the notion of
L-Dunford-Pettis set in a Banach space, which is related to the
Dunford-Pettis set (Definition \ref{D}), and note that every L-set in
a topological dual of a Banach space is L-Dunford-Pettis set
(Proposition~\ref{prop}). After that, we introduce the
L-Dunford-Pettis property in Banach space which is shared by those
Banach spaces whose L-Dunford-Pettis subsets of his topological dual
are relatively weakly compact (Definition \ref{De}). Next, we obtain
some important consequences. More precisely, a characterizations of
L-Dunford-Pettis property in Banach spaces in terms of DPcc and weakly
compact operators (Theorem \ref{th2}), the relation between
L-Dunford-Pettis property with DP and Grothendieck properties (Theorem
\ref{cr}), a new characterizations of Banach space with DPrcP (resp,
reflexive Banach space) (Theorem \ref{th1}) (resp, Corollary
\ref{cor}).  Finally, we investigate the complementability of the
class of weakly compact operators from $X$ into $\ell^{\infty}$ in the
class of DPcc from $X$ into $\ell^{\infty}$ (Theorem \ref{th3} and
Corollary \ref{cor1}).

The notations and terminologies are standard. We use the symbols $X$,
$Y$ for arbitrary Banach spaces.  We denoted the closed unit ball of
$X$ by $B_{X}$, the topological dual of $X$ by $X^{\prime}$ and
$T^{\prime}: Y^{\prime}\rightarrow X^{\prime}$ refers to the adjoint
of a bounded linear operator $T: X\rightarrow Y$.  We refer the reader
for undefined terminologies to the references \cite{A, Meg, M}.

\section{Main results}

\begin{definition}\label{D} {\rm{Let $X$ be a Banach space. A norm
      bounded subset $A$ of the dual space $X^{\prime}$ is called an
      L-Dunford-Pettis set, if every weakly null sequence $(x_{n})$,
      which is a DP set in $X$ converges uniformly to zero on $A$, that
      is, $\lim_{n\rightarrow \infty }\sup_{f\in A}
      \left|f(x_{n})\right|=0$}}.
\end{definition}


For a proof of the next Proposition, we need the following Lemma which
is just Lemma~1.3 of \cite{Y}.
\begin{lemma}\label{le}
A sequence $(x_{n})$ in $X$ is DP if and only if $f_{n}(x_{n})\rightarrow 0$ as $n\to\infty$ for every weakly null sequence $(f_{n})$ in $X^{\prime}$.
\end{lemma}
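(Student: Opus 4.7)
The plan is to prove the biconditional by handling the easy direction via the definition and the harder direction by contrapositive plus an index-shuffling argument.

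For the forward implication, assume $(x_n)$ is a DP set in $X$ and let $(f_n)$ be weakly null in $X'$. By the definition of a DP set, $\sup_{k}\lvert f_n(x_k)\rvert \to 0$ as $n\to\infty$. Since $\lvert f_n(x_n)\rvert \le \sup_{k}\lvert f_n(x_k)\rvert$, we get $f_n(x_n)\to 0$. This step is one line and presents no difficulty.

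For the converse, I would argue by contrapositive: suppose $(x_n)$ is \emph{not} a DP set. Then there is a weakly null sequence $(g_k)$ in $X'$ and some $\varepsilon>0$ such that $\sup_{n}\lvert g_k(x_n)\rvert > \varepsilon$ for infinitely many $k$; after passing to a subsequence we may assume this holds for every $k$. For each $k$ choose an index $n_k$ with $\lvert g_k(x_{n_k})\rvert > \varepsilon$. The main technical point is to show that the $n_k$ can be arranged to tend to infinity: if some index $n_0$ occurred for infinitely many $k$, then $\lvert g_k(x_{n_0})\rvert > \varepsilon$ infinitely often, contradicting $g_k \xrightarrow{w} 0$ (since $x_{n_0}$ gives a fixed functional on $X'$ through evaluation). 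Hence, after a further extraction, one may assume $(n_k)$ is strictly increasing.

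Now define a new sequence $(f_n)\subset X'$ by $f_{n_k}:=g_k$ and $f_n:=0$ otherwise. For any $x''\in X''$, given $\delta>0$, pick $K$ with $\lvert x''(g_k)\rvert<\delta$ for $k\ge K$; then for $n\ge n_K$ we have either $f_n=0$ or $f_n=g_k$ with $k\ge K$, hence $\lvert x''(f_n)\rvert<\delta$. Thus $(f_n)$ is weakly null in $X'$. However $\lvert f_{n_k}(x_{n_k})\rvert=\lvert g_k(x_{n_k})\rvert>\varepsilon$, so $f_n(x_n)\not\to 0$, which is the desired contradiction. The main obstacle is the matching-up of the two index variables (the defining condition for DP uses a single $n$ in $f_n$ but a sup over all $x_k$), and the zero-padding trick together with extraction of a strictly increasing $(n_k)$ is exactly what resolves it.
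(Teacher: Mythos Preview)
Your argument is correct in both directions. The forward implication is immediate from the definition of a DP set, and your contrapositive for the converse is sound: once you observe that no fixed index $n_0$ can be selected infinitely often (otherwise $g_k(x_{n_0})\not\to 0$, contradicting weak nullity of $(g_k)$), every value occurs only finitely many times, so in fact $n_k\to\infty$ and a strictly increasing subsequence can be extracted; the zero-padding construction of $(f_n)$ then produces the required weakly null sequence with $f_{n}(x_{n})\not\to 0$.

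There is nothing to compare against in the paper itself: the authors do not prove this lemma but simply cite it as Lemma~1.3 of Wen and Chen~\cite{Y}. Your proof is the standard diagonal/index-alignment argument one would expect for such a statement.
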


The following Proposition gives some additional properties of
L-Dunford-Pettis sets in a topological dual Banach space.

\begin{proposition}\label{prop}
Let $X$ be a Banach space. Then
\begin{enumerate}
\item every subset of an L-Dunford-Pettis set in $X^{\prime}$ is L-Dunford-Pettis,
\item every L-set in $X^{\prime}$ is L-Dunford-Pettis,
\item relatively weakly compact subset of $X^{\prime}$ is L-Dunford-Pettis,
\item absolutely closed convex hull of an L-Dunford-Pettis set in $X^{\prime}$ is L-Dunford-Pettis.
\end{enumerate}
\end{proposition}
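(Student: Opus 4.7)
The plan is to dispatch the four items sequentially, since they are of increasing depth. Items~(1) and~(2) follow immediately from the definitions: if $A\subseteq B$ and $B$ is L-Dunford-Pettis then, for every weakly null DP sequence $(x_n)$, $\sup_{f\in A}|f(x_n)|\le\sup_{f\in B}|f(x_n)|\to 0$; and an L-set by definition makes every weakly null sequence in $X$ converge uniformly to zero on it, so in particular every weakly null sequence which is a DP set.

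For item~(3), I would argue by contradiction. Assume $A\subset X'$ is relatively weakly compact but fails to be L-Dunford-Pettis; then there exist a weakly null DP sequence $(x_n)$ in $X$, some $\varepsilon>0$, and a sequence $(f_n)\subset A$ with $|f_n(x_n)|\ge\varepsilon$ for all $n$. Relative weak compactness of $A$, together with the Eberlein--\v{S}mulian theorem, yields a subsequence $(f_{n_k})$ converging weakly to some $f\in X'$. Then $(f_{n_k}-f)$ is weakly null in $X'$; since every subset of a DP set is DP, the subsequence $(x_{n_k})$ is still a DP sequence, so Lemma~\ref{le} gives $(f_{n_k}-f)(x_{n_k})\to 0$. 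Combined with $f(x_{n_k})\to 0$ (which holds because $x_n\to 0$ weakly), this forces $f_{n_k}(x_{n_k})\to 0$, contradicting $|f_{n_k}(x_{n_k})|\ge\varepsilon$. This is the step I expect to be the main obstacle, since it is where the DP-set hypothesis has to be combined with weak compactness via the diagonal-type invocation of Lemma~\ref{le}.

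For item~(4), I would first observe that if $g=\sum_i \lambda_i f_i$ lies in the absolute convex hull $\mathrm{aco}(A)$ with $\sum_i|\lambda_i|\le 1$ and $f_i\in A$, then for each $x\in X$,
\[
|g(x)|\le \sum_i |\lambda_i|\,|f_i(x)|\le \sup_{f\in A}|f(x)|.
\]
Because the evaluation $g\mapsto g(x)$ is norm continuous on $X'$, this bound persists for every $g$ in the norm closure $\overline{\mathrm{aco}}(A)$. Applied to $x=x_n$ along an arbitrary weakly null DP sequence in $X$, this gives
\[
\sup_{g\in\overline{\mathrm{aco}}(A)}|g(x_n)|\le \sup_{f\in A}|f(x_n)|\longrightarrow 0,
\]
where the limit holds because $A$ itself is L-Dunford-Pettis. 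This completes the plan.
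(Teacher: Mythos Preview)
Your proposal is correct and follows essentially the same approach as the paper: items~(1)--(2) are immediate, item~(3) is a contradiction argument that extracts a weakly convergent subsequence from $A$ and then applies Lemma~\ref{le} to the diagonal pairing $(f_{n_k}-f)(x_{n_k})$, and item~(4) bounds the supremum over $\overline{\mathrm{aco}}(A)$ by the supremum over $A$. The only cosmetic differences are that you explicitly invoke Eberlein--\v{S}mulian and track subsequence indices carefully in~(3), and that the paper asserts an equality $\sup_{f\in\overline{\mathrm{aco}}(A)}|f(x_n)|=\sup_{f\in A}|f(x_n)|$ in~(4) whereas you prove only the inequality (which is all that is needed).
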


\begin{proof}
$(1)$ and $(2)$ are obvious.

$(3)$ Suppose $A\subset X^{\prime}$ is relatively weakly compact but it is not an L-Dunford-Pettis set. Then, there exists a weakly null sequence $(x_{n})$, which is a DP set in $X$, a sequence $(f_{n})$ in $A$ and an $\epsilon>0$ such that $\left|f_{n}(x_{n})\right|>\epsilon$ for all integer n. As $A$ is relatively weakly compact, there exists a subsequence $(g_{n})$ of $(f_{n})$ that converges weakly to an element $g$ in $X^{\prime}$. But from
\begin{center}
$\left|g_{n}(x_{n})\right|\leq\left|(g_{n}-g)(x_{n})\right|+
\left|g(x_{n})\right|$
\end{center}
and Lemma \ref{le}, we obtain that $\left|g_{n}(x_{n})\right|\rightarrow 0$ as $n\to\infty$. This is a contradiction.

$(4)$ Let $A$ be a L-Dunford-Pettis set in $X^{\prime}$, and $(x_{n})$ be a weakly null sequence, which is a DP set in $X$. Since
\begin{center}
$\sup_{f\in a\overline{co}(A)}\left|f(x_{n})\right|= \sup_{f\in
A}\left|f(x_{n})\right|$
\end{center}
for each n, where $a\overline{co}(A)= \overline{\left\{\sum^{n}_{i= 1}\lambda_{i}x_{i}: x_{i}\in A, \forall i,
\sum^{n}_{i=1}\left|\lambda_{i}\right|\leq1\right\}}$ is the absolutely closed convex hull of $A$
(see \cite[pp.~148, 151]{A}), then it is clear that $a\overline{co}(A)$ is L-Dunford-Pettis set in $X^{\prime}$.
\end{proof}
We need the following Lemma which is just Lemma 1.2 of \cite{Y}.
\begin{lemma}\label{lm}
  A Banach space X has the DPrcP if and only if any weakly null
  sequence, which is a DP set in $X$ is norm null.
\end{lemma}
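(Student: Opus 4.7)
The plan is to prove both implications directly, using the definition of the DPrcP (every Dunford-Pettis set in $X$ is relatively compact) and, in the harder direction, the characterization of DP sequences from Lemma~\ref{le}.

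The forward implication is the routine direction. Assuming $X$ has the DPrcP and given a weakly null sequence $(x_n)$ whose range $\{x_n\}$ is a DP set, the set $\{x_n\}$ is relatively compact, so every subsequence of $(x_n)$ has a further norm convergent sub-subsequence, whose limit must be $0$ because $(x_n)$ is weakly null. The standard subsequence principle then yields $\|x_n\|\to 0$.

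For the backward implication, assume every weakly null sequence which is a DP set is norm null, and let $A\subset X$ be an arbitrary DP set. I argue by contradiction: if $A$ is not relatively compact, a standard extraction produces $(x_n)\subset A$ with $\|x_n-x_m\|\geq\epsilon$ for all $n\neq m$ and some $\epsilon>0$. I plan to extract from $(x_n)$ a weakly Cauchy subsequence $(y_n)$; then $z_n:=y_{2n}-y_{2n+1}$ is weakly null, satisfies $\|z_n\|\geq\epsilon$, and lies in $A-A$. A short computation ($\sup_{x,y\in A}|f_n(x-y)|\leq 2\sup_{x\in A}|f_n(x)|$ for $(f_n)$ weakly null in $X'$) shows that $A-A$ is itself DP, so $(z_n)$ is a weakly null DP sequence, and by hypothesis $\|z_n\|\to 0$, contradicting $\|z_n\|\geq\epsilon$.

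The main obstacle is the extraction of a weakly Cauchy subsequence from $(x_n)$. By Rosenthal's $\ell^1$-theorem this is possible unless $(x_n)$ has a subsequence equivalent to the unit vector basis of $\ell^1$, and ruling this alternative out amounts to showing that every DP subset of a Banach space is weakly precompact---a theorem of Andrews. Once this input is granted the contradiction above closes the argument; alternatively one can follow the direct route taken in \cite{Y}.
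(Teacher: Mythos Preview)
The paper does not supply a proof of this lemma at all; it simply records it as Lemma~1.2 of \cite{Y} and moves on, so there is no argument in the paper to compare yours against.

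Your proof is correct. The forward implication is routine, as you say. For the converse, the one substantive input you need is exactly the one you isolate: that a Dunford--Pettis set cannot contain a sequence equivalent to the $\ell^1$-basis (equivalently, DP sets are weakly precompact). This is indeed a known theorem due to Andrews, and once it is in hand Rosenthal's dichotomy yields the weakly Cauchy subsequence $(y_n)$; the difference sequence $z_n=y_{2n}-y_{2n+1}$ is then weakly null, bounded below in norm by $\epsilon$, and---being a subset of $A-A$, which you correctly verify is again DP---is itself a DP sequence. The hypothesis forces $\|z_n\|\to 0$, giving the contradiction. The argument is sound and self-contained modulo the cited result of Andrews; your closing remark that one may alternatively follow \cite{Y} directly is exactly what the paper does.
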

From Lemma \ref{lm}, we obtain the following characterization of DPrcP
in a Banach space in terms of an L-Dunford-Pettis set of his
topological dual.
\begin{theorem}\label{th1}
A Banach space $X$ has the DPrcP if and only if every bounded subset of $X^{\prime}$ is an L-Dunford-Pettis set.
\end{theorem}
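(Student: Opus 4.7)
The plan is to prove both implications directly, using Lemma \ref{lm} as the bridge between the DPrcP and the behavior of weakly null DP sequences. The whole argument should be short; the only non-trivial ingredient is the observation that the supremum over the unit ball of the dual recovers the norm.

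For the forward direction, I would assume $X$ has the DPrcP and fix an arbitrary norm bounded subset $A \subset X'$, say with $\sup_{f \in A}\|f\| \le M$. Given any weakly null sequence $(x_n)$ in $X$ that is a DP set, Lemma \ref{lm} yields $\|x_n\| \to 0$, hence
\[
\sup_{f \in A}|f(x_n)| \le M\,\|x_n\| \to 0,
\]
so $A$ is L-Dunford-Pettis by Definition~\ref{D}.

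For the converse, I would assume every bounded subset of $X'$ is L-Dunford-Pettis and, by Lemma \ref{lm}, reduce the task to showing that every weakly null DP sequence in $X$ is norm null. The key step is to apply the hypothesis to the distinguished bounded set $A = B_{X'}$: if $(x_n)$ is weakly null and DP, then
\[
\|x_n\| = \sup_{f \in B_{X'}}|f(x_n)| \to 0
\]
by the L-Dunford-Pettis property of $B_{X'}$, which is exactly what is needed.

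There is no real obstacle here; the whole argument hinges on the dual-norm identity $\|x\| = \sup_{f \in B_{X'}}|f(x)|$ together with Lemma~\ref{lm}, which was already quoted. The only care needed is to make sure one invokes Lemma~\ref{lm} in the right direction on each side of the equivalence.
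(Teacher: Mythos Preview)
Your proof is correct and follows essentially the same approach as the paper: both directions rely on Lemma~\ref{lm} together with the dual-norm identity $\|x\|=\sup_{f\in B_{X'}}|f(x)|$ and the elementary bound $\sup_{f\in A}|f(x_n)|\le M\|x_n\|$. The only cosmetic difference is that the paper phrases the forward implication as a proof by contradiction (extracting functionals $f_n\in A$ with $|f_n(x_n)|>\epsilon$), whereas your direct estimate is slightly cleaner.
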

\begin{proof}
$(\Leftarrow)$ Let $(x_{n})$ be a weakly null sequence, which is a DP set in $X$. As
\begin{center}
$\left\|x_{n}\right\|= \sup_{f\in
B_{X^{\prime}}}\left|f(x_{n})\right|$
\end{center}
for each $n$, and by our hypothesis, we see that $\left\|x_{n}\right\|\rightarrow 0$ as $n\to \infty$. By Lemma \ref{lm} we deduce that $X$ has the DPrcP.

$(\Rightarrow)$ Assume by way of contradiction that there exist a
bounded subset $A$, which is not an L-Dunford-Pettis set of
$X^{\prime}$. Then, there exists a weakly null sequence $(x_{n})$,
which is a Dunford-Pettis set of $X$ such that $\sup_{f\in
  A}\left|f(x_{n})\right|>\epsilon>0$ for some $\epsilon>0$ and each
$n$. Hence, for every $n$ there exists some $f_{n}$ in $A$ such that
$\left|f_{n}(x_{n})\right|>\epsilon$.

On the other hand, since $(f_{n})\subset A$, there exist some $M>0$
such that $\left\|f_{n}\right\|_{X^{\prime}}\leq M$ for all $n$. Thus,
\begin{center}
$\left|f_{n}(x_{n})\right|\leq M\left\|x_{n}\right\|$
\end{center}
for each $n$, then by our hypothesis and Lemma \ref{lm}, we have $\left|f_{n}(x_{n})\right|\to 0$ as $n\to \infty$, which is impossible. This completes the proof.
\end{proof}

\begin{remark} {\rm{Note by Proposition \ref{prop} assertion (3) that
      every relatively weakly compact subset of a topological dual
      Banach space is L-Dunford-Pettis. The converse is not true in
      general. In fact, the closed unit ball $B_{\ell^{\infty}}$ of
      ${\ell^{\infty}}$ is L-Dunford-Pettis set (see
      Theorem~\ref{th1}), but it is not relatively weakly compact}}.
\end{remark}

We make the following definition.

\begin{definition}\label{De}
{\rm{A Banach space X has the L-Dunford-Pettis property, if every L-Dunford-Pettis set in $X^{\prime}$ is
relatively weakly compact}}.
\end{definition}
As is known a DPcc operator is not weakly compact in general. For example, the identity operator $Id_{\ell^{1}}: {\ell^{1}}\rightarrow {\ell^{1}}$ is DPcc, but it is not weakly compact.

In the following Theorem, we give a characterizations of L-Dunford-Pettis property of Banach space in terms of DPcc and  weakly compact operators.
\begin{theorem}\label{th2}
Let X be a Banach space, then the following assertions are equivalent:

\begin{enumerate}
\item X has the L-Dunford-Pettis property,
\item for each Banach space Y, every DPcc operator from X into Y is weakly compact,
\item every DPcc operator from X into $\ell^{\infty}$ is weakly compact.
\end{enumerate}

\end{theorem}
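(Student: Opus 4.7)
The natural strategy is the cyclic chain $(1)\Rightarrow(2)\Rightarrow(3)\Rightarrow(1)$, since $(2)\Rightarrow(3)$ is immediate upon taking $Y=\ell^{\infty}$.

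For $(1)\Rightarrow(2)$, let $T:X\to Y$ be $DPcc$. I would use the fact that $T$ is weakly compact iff $T'(B_{Y'})\subset X'$ is relatively weakly compact, so by hypothesis (1) it suffices to verify that $T'(B_{Y'})$ is an L-Dunford-Pettis set. Given a weakly null sequence $(x_n)$ in $X$ that is also a DP set, compute
\[
\sup_{g\in B_{Y'}}|T'(g)(x_n)|=\sup_{g\in B_{Y'}}|g(Tx_n)|=\|Tx_n\|,
\]
and since $T$ is $DPcc$ this tends to $0$. Hence $T'(B_{Y'})$ is L-Dunford-Pettis, so relatively weakly compact, so $T$ is weakly compact.

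For the main step $(3)\Rightarrow(1)$, I would argue by contrapositive. Suppose $A\subset X'$ is an L-Dunford-Pettis set that is not relatively weakly compact. By the Eberlein--\v{S}mulian theorem pick a sequence $(f_n)\subset A$ having no weakly convergent subsequence in $X'$. Define
\[
T:X\longrightarrow \ell^{\infty},\qquad T(x)=(f_n(x))_{n\ge 1}.
\]
Boundedness is clear since $(f_n)$ is bounded. The plan is to show that $T$ is $DPcc$ but not weakly compact, contradicting (3). For $DPcc$: if $(x_k)$ is a weakly null DP sequence in $X$, then $\|Tx_k\|_{\infty}=\sup_n|f_n(x_k)|\to 0$ because $\{f_n\}\subset A$ is L-Dunford-Pettis. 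For failure of weak compactness: identifying the standard unit vectors $e_n\in \ell^{1}\subset (\ell^{\infty})'$, one has $T'(e_n)(x)=e_n(Tx)=f_n(x)$, so $T'(e_n)=f_n$. If $T$ were weakly compact then $T'$ would be weakly compact, and $(f_n)=(T'(e_n))$ would admit a weakly convergent subsequence in $X'$, contradicting the choice of $(f_n)$.

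I expect the only delicate point to be this last implication: one must ensure the sequence $(f_n)$ really is reachable as $T'$-image of a bounded sequence in $(\ell^{\infty})'$, which is why I prefer to realize the $e_n$ as elements of $\ell^{1}$ viewed inside $(\ell^{\infty})'$ rather than trying to manipulate the full bidual of $\ell^{\infty}$. The rest of the argument is formal and relies only on Lemma~\ref{le}, Lemma~\ref{lm}, and the standard duality description of weak compactness.
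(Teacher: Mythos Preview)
Your proof is correct and follows essentially the same route as the paper: the cycle $(1)\Rightarrow(2)\Rightarrow(3)\Rightarrow(1)$, with $(1)\Rightarrow(2)$ via the observation that $T'(B_{Y'})$ is an L-Dunford-Pettis set when $T$ is $DPcc$, and $(3)\Rightarrow(1)$ via the operator $T(x)=(f_n(x))_n$ into $\ell^\infty$ built from a sequence $(f_n)$ in an L-Dunford-Pettis set with no weakly convergent subsequence, using $T'(e_n)=f_n$ to block weak compactness. The only quibble is your closing remark: neither Lemma~\ref{le} nor Lemma~\ref{lm} is actually invoked in this argument.
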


\begin{proof}
$(1)\Rightarrow(2)$ Suppose that $X$ has the L-Dunford-Pettis property and $T: X\rightarrow Y$ is DPcc operator. Thus $T^{\prime}(B_{Y^{\prime}})$
is an L-Dunford-Pettis set in $X^{\prime}$. So by hypothesis, it is relatively weakly compact and $T$ is a weakly compact operator.

$(2)\Rightarrow(3)$ Obvious.

$(3)\Rightarrow(1)$ If $X$ does not have the L-Dunford-Pettis property, there exists an L-Dunford-Pettis subset $A$ of $X^{\prime}$ that is not relatively weakly compact. So there is a sequence $(f_{n})\subseteq A$ with no weakly convergent subsequence. Now, we show that the operator
$T: X\rightarrow\ell^{\infty}$ defined by $T(x)= (f_{n}(x))$ for all $x\in X$ is DPcc but it is not weakly compact. As $(f_{n})\subseteq A$ is L-Dunford-Pettis set, for every weakly null sequence $(x_{m})$, which is a DP set in $X$ we have
\begin{center}
$\left\|T(x_{m})\right\|= \sup_{n}\left|f_{n}(x_{m})\right|\rightarrow0$, as $m\rightarrow\infty$,
\end{center}
so $T$ is a Dunford-Pettis completely continuous operator. We have $T^{\prime}((\lambda_{n})^{\infty}_{n=1})= \sum^{\infty}_{n=1}\lambda_{n}f_{n}$ for every $(\lambda_{n})^{\infty}_{n=1}\in\ell^{1}\subset(\ell^{\infty})^{\prime}$. If $e^{\prime}_{n}$ is the usual basis element in $\ell^{1}$ then  $T^{\prime}(e_{n}^{\prime})= f_{n}$, for all $n\in N$. Thus, $T^{\prime}$ is not a weakly compact operator and neither is $T$. This finishes the proof.
\end{proof}

\begin{theorem}\label{cr}
Let $E$ be a Banach lattice.\\
If $E$ has both properties of DP and Grothendieck, then it has the L-Dunford-Pettis pro\-perty.
\end{theorem}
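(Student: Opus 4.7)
The plan is to use Theorem~\ref{th2}: the $L$-Dunford--Pettis property of $E$ is equivalent to every DPcc operator $T:E\to\ell^{\infty}$ being weakly compact, so I will verify the latter. Let $T:E\to\ell^{\infty}$ be DPcc. Since $E$ has the Dunford--Pettis property, every relatively weakly compact subset of $E$ (and in particular every weakly null sequence) is a Dunford--Pettis set, so the DPcc hypothesis on $T$ collapses to $T$ being completely continuous.

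The task is therefore to show that every completely continuous operator $T:E\to\ell^{\infty}$ is weakly compact. Writing $T(x)=(f_{n}(x))_{n}$ with $f_{n}=T'(e_{n}')\in E'$ and $\sup_{n}\|f_{n}\|\leq\|T\|$, complete continuity of $T$ is exactly the statement that $T'(B_{(\ell^{\infty})'})$ is an $L$-set in $E'$, and by Gantmacher's theorem the weak compactness of $T$ amounts to $T'(B_{(\ell^{\infty})'})$ being relatively weakly compact in $E'$. By the Eberlein--\v{S}mulian theorem it is enough to extract a weakly convergent subsequence from an arbitrary sequence $(g_{n})$ inside this $L$-set. I would invoke Rosenthal's $\ell^{1}$ dichotomy on $(g_{n})\subset E'$: either (a)~some subsequence is weakly Cauchy, or (b)~some subsequence is equivalent to the unit vector basis of $\ell^{1}$.

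Case (a) is handled by the Grothendieck property of $E$: a bounded weakly Cauchy subsequence $(g_{n_{k}})$ is in particular weak${}^{\ast}$ Cauchy, and its pointwise limit defines an $f\in E'$ by the Banach--Steinhaus theorem, giving $g_{n_{k}}\to f$ in the weak${}^{\ast}$ topology of $E'$; the Grothendieck property then upgrades this to weak convergence in $E'$, which is what is required.

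Case (b) is the main obstacle, and it is here that the Banach lattice hypothesis is essential. Using that $E'$ is a Banach lattice and contains a subsequence equivalent to the $\ell^{1}$-basis, a Kadec--Pelczy\'nski/Bessaga--Pelczy\'nski style disjointification lets one replace this subsequence by a normalized lattice-disjoint $\ell^{1}$-equivalent sequence $(h_{k})\subset E'$ still controlled in modulus by the original $L$-set. The crux is then to show that, under the DP property of the Banach lattice $E$, a disjoint $\ell^{1}$-equivalent sequence in $E'$ cannot be an $L$-set: exploiting disjointness one constructs a weakly null sequence $(x_{k})\subset E$ on which $|h_{k}(x_{k})|$ stays bounded away from zero, directly contradicting the $L$-set property inherited by $(h_{k})$. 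This contradiction rules out case (b) and completes the proof.
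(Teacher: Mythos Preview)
Your reduction via Theorem~\ref{th2} to showing that every completely continuous operator on $E$ is weakly compact---equivalently, that $E$ has the RDP property---is exactly the paper's strategy, and your case~(a) via the Grothendieck property is correct. The gap is in case~(b). The Kadec--Pe\l czy\'nski disjointification you invoke is not available in an arbitrary dual Banach lattice $E'$; it typically requires order continuity of the norm, which is not assumed here. Even granting a disjoint $\ell^{1}$-equivalent sequence $(h_{k})$ in $E'$, the phrase ``controlled in modulus by the original $L$-set'' does not guarantee that $(h_{k})$ is itself an $L$-set, since $L$-sets are not solid. Most seriously, the promised construction of a weakly null sequence $(x_{k})\subset E$ with $|h_{k}(x_{k})|\geq\varepsilon$ is precisely the hard step, and you give no mechanism for it: the DP property of $E$ does not help produce weakly null sequences in $E$, and a disjoint bounded sequence in $E$ need not be weakly null (take $E=\ell^{1}$). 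Note also that in $E=\ell^{1}$, which has DP, every bounded subset of $E'=\ell^{\infty}$ is an $L$-set and $\ell^{\infty}$ contains copies of $\ell^{1}$, so case~(b) cannot be excluded using DP and the lattice structure alone; the Grothendieck hypothesis must enter here as well, and your sketch does not say how.

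The paper bypasses all of this by quoting a theorem of Niculescu~\cite{N}: a Banach lattice containing no complemented copy of $\ell^{1}$ has the RDP property. Since $\ell^{1}$ is not a Grothendieck space and the Grothendieck property passes to complemented subspaces, $E$ contains no complemented $\ell^{1}$, hence has RDP, and every completely continuous operator on $E$ is weakly compact. This single citation replaces your entire Rosenthal dichotomy argument.
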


\begin{proof}
Suppose that $T: E\rightarrow Y$ is DPcc operator. As $E$ has the DP property, it follows from Theorem 1.5 \cite{Y} that $T$ is completely continuous.

On the other hand, $\ell^{1}$ is not a Grothendieck space and
Grothendieck property is carried by complemented subspaces. Hence the
Grothendieck space $E$ does not have any complemented copy of
$\ell^{1}$. By \cite{N}, $E$ has the RDP property and so the
completely continuous operator $T$ is weakly compact. From Theorem
\ref{th2} we deduce that $E$ has the L-Dunford-Pettis property.
\end{proof}

\begin{remark} {\rm{Since $\ell^{\infty}$ has the Grothendieck and DP
      properties, it has the L-Dunford-Pettis property}}.
\end{remark}

Let us recall that $K$ is an infinite compact Hausdorff space if it is
a compact Hausdorff space, which contains infinitely many points.

For an infinite compact Hausdorff space $K$, we have the following
result for the Banach space $C(K)$ of all continuous functions on $K$
with supremum norm.

\begin{corollary}
If $C(K)$ contains no complemented copy of $c_{0}$, then it has L-Dunford-Pettis property.
\end{corollary}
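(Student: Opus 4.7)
The plan is to reduce this corollary to Theorem \ref{cr}, since $C(K)$ is naturally a Banach lattice (under the pointwise order and supremum norm). According to Theorem \ref{cr}, it suffices to verify two properties for $C(K)$: the Dunford-Pettis property and the Grothendieck property.

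The first one is essentially free: it is a classical theorem of Grothendieck that every $C(K)$ space enjoys the Dunford-Pettis property, so no work is needed on that front. The real content of the corollary lies in establishing the Grothendieck property from the hypothesis that $C(K)$ contains no complemented copy of $c_0$. My proposal is to invoke the known characterization (due to Cembranos, and independently Saab--Saab) that for any infinite compact Hausdorff space $K$, the Banach space $C(K)$ is Grothendieck if and only if $C(K)$ contains no complemented copy of $c_0$. In particular, the hypothesis directly supplies that $C(K)$ is Grothendieck.

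With both the DP and Grothendieck properties in hand for the Banach lattice $C(K)$, Theorem \ref{cr} immediately yields the L-Dunford-Pettis property, concluding the proof.

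The main obstacle is not a delicate calculation but a bibliographic one: producing (or referencing) the Cembranos-type theorem connecting the absence of a complemented copy of $c_0$ in $C(K)$ with the Grothendieck property. Once that is cited, the rest is a one-line application of Theorem \ref{cr}, with no further estimates or operator-theoretic constructions needed.
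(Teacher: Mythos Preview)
Your proposal is correct and matches the paper's proof essentially step for step: the paper also cites Cembranos to pass from ``no complemented copy of $c_0$'' to the Grothendieck property, notes that $C(K)$ is a Banach lattice with the DP property, and then applies Theorem~\ref{cr}. There is nothing to add or change.
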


\begin{proof}
  Since $C(K)$ contains no complemented copy of $c_{0}$, it is a
  Grothendieck space \cite{C}. On the other hand, $C(K)$ be a Banach
  lattice with the DP property, and by Theorem \ref{cr} we deduce that
  $C(K)$ has L-Dunford-Pettis property.
\end{proof}

\begin{corollary}\label{cor}
A DPrc space has the L-Dunford-Pettis property if and only if it is reflexive.
\end{corollary}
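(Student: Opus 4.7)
The plan is to combine Theorem~\ref{th1} (which characterizes DPrcP in terms of L-Dunford-Pettis subsets of the dual) with Definition~\ref{De} (the L-Dunford-Pettis property). The whole argument reduces, in both directions, to pushing a bounded set through the two characterizations and observing that reflexivity of $X$ is the same as relative weak compactness of $B_{X^{\prime}}$.

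For the implication $(\Rightarrow)$, I would start by assuming $X$ is a DPrc space having the L-Dunford-Pettis property. Theorem~\ref{th1} then says that \emph{every} bounded subset of $X^{\prime}$ is an L-Dunford-Pettis set. Applying the L-Dunford-Pettis property from Definition~\ref{De}, every such set is relatively weakly compact. In particular the closed unit ball $B_{X^{\prime}}$ is relatively weakly compact, so $X^{\prime}$ is reflexive, and hence $X$ is reflexive.

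For the implication $(\Leftarrow)$, I would start from reflexivity of $X$. Then $X^{\prime}$ is reflexive as well, so every norm bounded subset of $X^{\prime}$ is relatively weakly compact. Since an L-Dunford-Pettis set is bounded by definition, it is automatically relatively weakly compact, which is exactly the L-Dunford-Pettis property from Definition~\ref{De}.

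There is no real obstacle here: once Theorem~\ref{th1} is available, the corollary is essentially a tautology combining the two characterizations with the identity ``reflexive $\Leftrightarrow$ bounded sets in the dual are relatively weakly compact''. The only small point to watch is to remember that the hypothesis ``DPrc space'' is only needed in the forward direction (to invoke Theorem~\ref{th1}); the backward direction uses reflexivity alone.
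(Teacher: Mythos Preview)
Your argument is correct, but it follows a different path from the paper's. The paper proves $(\Rightarrow)$ via operators: since $X$ has the DPrcP, Theorem~1.3 of \cite{Y} gives that $Id_X$ is DPcc, and then Theorem~\ref{th2} (the operator characterization of the L-Dunford-Pettis property) forces $Id_X$ to be weakly compact, hence $X$ is reflexive. You instead stay at the level of sets: Theorem~\ref{th1} makes every bounded subset of $X'$ an L-Dunford-Pettis set, and Definition~\ref{De} then makes it relatively weakly compact, so $B_{X'}$ is weakly compact and $X$ is reflexive. Your route is slightly more self-contained (it avoids the external citation of Theorem~1.3 of \cite{Y} and does not need Theorem~\ref{th2}), while the paper's route illustrates the operator-theoretic viewpoint that motivates Theorem~\ref{th2}. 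For $(\Leftarrow)$ both you and the paper observe that reflexivity alone suffices; your remark that the DPrcP hypothesis is idle in that direction is accurate.
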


\begin{proof}
$(\Rightarrow)$ If a Banach space $X$ has the DPrcP, then by Theorem 1.3 of \cite{Y}, the identity operator $Id_{X}$ on $X$ is DPcc. As $X$ has the L-Dunford-Pettis property, it follows from Theorem \ref{th2} that $Id_{X}$ is weakly compact, and hence $X$ is reflexive.

$(\Leftarrow)$ Obvious.
\end{proof}
\begin{remark}
{\rm{Note that the Banach space $\ell^{1}$ is not reflexive and has the DPrcP, then from Corollary \ref{cor},
we conclude that $\ell^{1}$ does not have the L-Dunford-Pettis property}}.
\end{remark}

\begin{theorem}
If a Banach space X has the L-Dunford-Pettis property, then every complemented subspace of X has the L-Dunford-Pettis property.
\end{theorem}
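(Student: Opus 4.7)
The plan is to work directly from the definition and exploit the fact that the projection $P\colon X\to X_{1}$ and its adjoint behave nicely under the relevant classes of sequences and sets. Fix a complemented subspace $X_{1}\subseteq X$ with projection $P\colon X\to X_{1}$, and write $i\colon X_{1}\hookrightarrow X$ for the inclusion so that $P\circ i=\mathrm{Id}_{X_{1}}$, and therefore $i'\circ P'=\mathrm{Id}_{X_{1}'}$. Let $A\subseteq X_{1}'$ be an L-Dunford-Pettis set; our goal is to show that $A$ is relatively weakly compact in $X_{1}'$.

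The first step is a transfer lemma: if $(x_{n})$ is a weakly null sequence in $X$ that is a DP set, then $(P(x_{n}))$ is a weakly null DP sequence in $X_{1}$. Weak nullity is immediate from continuity of $P$, and for the DP property I would invoke Lemma \ref{le}: given any weakly null $(g_{n})$ in $X_{1}'$, the sequence $(P'g_{n})$ is weakly null in $X'$, and $g_{n}(P(x_{n}))=(P'g_{n})(x_{n})\to 0$ because $(x_{n})$ is DP in $X$.

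The second step is to show that $P'(A)\subseteq X'$ is an L-Dunford-Pettis set. For any weakly null DP sequence $(x_{n})$ in $X$, by the previous step $(P(x_{n}))$ is weakly null and DP in $X_{1}$, so
\[
\sup_{g\in P'(A)}\bigl|g(x_{n})\bigr|=\sup_{f\in A}\bigl|f(P(x_{n}))\bigr|\longrightarrow 0
\]
since $A$ is L-Dunford-Pettis in $X_{1}'$. Because $X$ has the L-Dunford-Pettis property, $P'(A)$ is then relatively weakly compact in $X'$.

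The third step finishes the argument: apply the bounded (hence weakly continuous) operator $i'\colon X'\to X_{1}'$ to the relatively weakly compact set $P'(A)$ and use the identity $i'\circ P'=\mathrm{Id}_{X_{1}'}$ to conclude
\[
A=i'\bigl(P'(A)\bigr),
\]
which is therefore relatively weakly compact in $X_{1}'$. By Definition \ref{De}, $X_{1}$ has the L-Dunford-Pettis property. There is no real obstacle here; the only substantive point is the transfer lemma in the first step, and that is a direct consequence of Lemma \ref{le} together with the weak-to-weak continuity of adjoints. (Alternatively, one could argue via Theorem \ref{th2}: if $S\colon X_{1}\to Y$ is DPcc, then $S\circ P\colon X\to Y$ is DPcc by the same transfer lemma, hence weakly compact, and $S=(S\circ P)\circ i$ inherits weak compactness.)
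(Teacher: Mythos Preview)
Your argument is correct. The paper's proof, however, goes the operator-theoretic route that you sketch only as an alternative at the end: it invokes Theorem~\ref{th2}, takes a DPcc operator $T\colon X_{1}\to\ell^{\infty}$, observes that $TP\colon X\to\ell^{\infty}$ is again DPcc, concludes $TP$ is weakly compact since $X$ has the L-Dunford-Pettis property, and hence so is $T=(TP)\circ i$. Your primary argument works directly from Definition~\ref{De}, pushing an L-Dunford-Pettis set $A\subseteq X_{1}'$ forward via $P'$ and pulling it back via $i'$. Both approaches rest on the same ``transfer lemma'' (that $P$ sends weakly null DP sequences in $X$ to weakly null DP sequences in $X_{1}$); you prove it explicitly, while the paper leaves it implicit in the claim that $TP$ is DPcc. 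The paper's route is shorter once Theorem~\ref{th2} is available; your direct route is more self-contained and makes transparent how the L-Dunford-Pettis property of sets behaves under adjoints of projections.
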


\begin{proof}
Consider a complemented subspace $X_{1}$ of $X$ and a projection map $P: X\rightarrow X_{1}$. Suppose $T: X_{1}\rightarrow\ell^{\infty}$ is DPcc operator, then $TP:  X\rightarrow\ell^{\infty}$ is also DPcc. Since $X$ has L-Dunford-Pettis, by Theorem \ref{th2}, TP is weakly compact. Hence $T$
is weakly compact, also from Theorem \ref{th2} we conclude that $X_{1}$ has L-Dunford-Pettis, and this completes the proof.
\end{proof}
Let $X$ be a Banach space. We denote by $L(X, \ell^{\infty})$ the
class of all bounded linear operators from $X$ into $\ell^{\infty}$,
by $W(X, \ell^{\infty})$ the class of all weakly compact operators
from $X$ into $\ell^{\infty}$, and by $DPcc(X, \ell^{\infty})$ the
class of all Dunford-Pettis completely continuous operators from $X$
into $\ell^{\infty}$.

Recall that Bahreini in  \cite{B} investigated the
complementability of $W(X, \ell^{\infty})$ in $L(X, \ell^{\infty})$,
and she proved that if $X$ is not a reflexive Banach space, then $W(X,
\ell^{\infty})$ is not complemented in $L(X, \ell^{\infty})$. In the
next Theorem, we establish the complementability of $W(X,
\ell^{\infty})$ in $DPcc(X, \ell^{\infty})$.

We need the following lemma of \cite{K} .
\begin{lemma}\label{lem}
Let X be a separable Banach space, and $\phi: \ell^{\infty}\rightarrow L(X, \ell^{\infty})$ is a bounded linear operator with $\phi(e_{n})= 0$ for all n, where $e_{n}$ is the usual basis element in $c_{0}$. Then there is an infinite subset $M$ of $N$ such that for each $\alpha\in\ell^{\infty}(M)$, $\phi(\alpha)= 0$, where $\ell^{\infty}(M)$ is the set of all $\alpha= (\alpha_{n})\in\ell^{\infty}$ with $\alpha_{n}= 0$ for each $n\notin M$.
\end{lemma}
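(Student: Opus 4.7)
The approach is to dualize and work with finitely additive measures on $\mathbb{N}$. Fix a countable dense set $D\subset X$ (using separability of $X$), and for each $x\in D$ and each $k\in\mathbb{N}$ define the functional $\mu_{x,k}\in(\ell^{\infty})^{\prime}$ by $\mu_{x,k}(\alpha) = [\phi(\alpha)(x)]_{k}$. Via the standard identification $(\ell^{\infty})^{\prime}\cong\mathrm{ba}(\mathbb{N})$, each $\mu_{x,k}$ is a bounded finitely additive signed measure on $\mathbb{N}$; the hypothesis $\phi(e_{n})=0$ forces $\mu_{x,k}(\{n\})=0$ for every $n$, so by finite additivity its total variation $|\mu_{x,k}|$ vanishes on every finite subset of $\mathbb{N}$.

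Enumerate the countable family $\{|\mu_{x,k}| : x\in D,\ k\in\mathbb{N}\}$ as $(\nu_{n})$. The crucial step is to produce one infinite $M\subset\mathbb{N}$ with $\nu_{n}(M)=0$ for every $n$. I would first handle a single positive finitely additive $\nu$ vanishing on finite sets: given any infinite $B$, split $B$ into two infinite halves and retain the one of smaller $\nu$-value, iterating to obtain infinite sets $B=B^{(0)}\supset B^{(1)}\supset\cdots$ with $\nu(B^{(k)})\leq 2^{-k}\nu(B)$; picking one point from each level gives an infinite $A\subset B$ whose tail lies in every $B^{(k)}$, whence $\nu(A)=0$. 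Applying this construction successively inside nested infinite sets produces $\mathbb{N}\supset A_{1}\supset A_{2}\supset\cdots$ with $\nu_{n}(A_{n})=0$, and a diagonal selection $m_{n}\in A_{n}$ with $m_{n}<m_{n+1}$ delivers the desired set $M=\{m_{n}:n\in\mathbb{N}\}$: for each fixed $n$ the tail $\{m_{n},m_{n+1},\ldots\}$ is contained in $A_{n}$, hence $\nu_{n}(M)\leq\nu_{n}(A_{n})=0$.

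Once such $M$ is secured, for every $\alpha\in\ell^{\infty}(M)$ and every pair $(x,k)$ in the countable family we have $|\mu_{x,k}(\alpha)|\leq\|\alpha\|_{\infty}\,|\mu_{x,k}|(M)=0$, hence $[\phi(\alpha)(x)]_{k}=0$ for all $k\in\mathbb{N}$ and all $x\in D$; boundedness of the operator $\phi(\alpha)\in L(X,\ell^{\infty})$ together with density of $D$ in $X$ extend this to $\phi(\alpha)(x)=0$ for every $x\in X$, so $\phi(\alpha)=0$. The main obstacle is the combined bisection-and-diagonalization step, since one must simultaneously annihilate countably many (purely finitely additive) measures on a single infinite index set; this is made possible precisely by the hypothesis $\phi(e_{n})=0$, which translates into the total variations of the $\mu_{x,k}$ vanishing on every finite subset of $\mathbb{N}$.
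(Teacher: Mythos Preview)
The paper does not supply a proof of this lemma; it merely quotes it from Kalton~\cite{K} and uses it as a black box in the proof of Theorem~\ref{th3}. So there is no ``paper's own proof'' to compare against.

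Your argument is correct and is essentially the classical one. A couple of minor points worth tightening: when you pick one point from each level $B^{(k)}$ you should say explicitly that the points are chosen distinct (possible since each $B^{(k)}$ is infinite), so that $A$ is genuinely infinite; and in the final estimate $|\mu_{x,k}(\alpha)|\le\|\alpha\|_{\infty}\,|\mu_{x,k}|(M)$ you are implicitly using that for $\alpha$ supported on $M$ one has $\int\alpha\,d\mu=\int\alpha\,d(\chi_{M}\mu)$ together with $|\chi_{M}\mu|(\mathbb{N})=|\mu|(M)$, which holds for finitely additive measures. With those details in place the bisection-plus-diagonalization (a form of Rosenthal's lemma) goes through exactly as you describe, and the density argument at the end is straightforward.
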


\begin{theorem}\label{th3}
If X does not have the L-Dunford-Pettis property, then $W(X, \ell^{\infty})$ is not complemented in $DPcc(X, \ell^{\infty})$.
\end{theorem}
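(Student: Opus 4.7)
The starting point is the construction inside the proof of Theorem~\ref{th2}: from the assumption that $X$ lacks the L-Dunford-Pettis property I extract a bounded sequence $(f_n) \subseteq X'$ whose range $\{f_n\}$ is an L-Dunford-Pettis set admitting no weakly convergent subsequence. I will argue by contradiction and suppose that a bounded projection $P \colon DPcc(X,\ell^{\infty}) \to W(X,\ell^{\infty})$ exists.

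The central construction is the bounded linear map $\psi \colon \ell^{\infty} \to DPcc(X,\ell^{\infty})$ defined by $\psi(\alpha)(x) = (\alpha_n f_n(x))_n$. Boundedness is immediate from the boundedness of $(f_n)$, and the fact that $\psi(\alpha)$ is DPcc follows because for every weakly null DP sequence $(x_m) \subseteq X$,
\[
\|\psi(\alpha)(x_m)\|_{\infty} = \sup_n |\alpha_n f_n(x_m)| \le \|\alpha\|_{\infty} \sup_n |f_n(x_m)| \longrightarrow 0,
\]
by the L-Dunford-Pettis character of $\{f_n\}$. I then form $\phi = (I - P)\circ \psi \colon \ell^{\infty} \to L(X,\ell^{\infty})$. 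Since each $\psi(e_n)$ is a rank-one (hence weakly compact) operator, $P$ fixes it, so $\phi(e_n) = 0$ for every $n$.

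Next I apply Lemma~\ref{lem} to $\phi$ to obtain an infinite $M \subseteq \mathbb{N}$ with $\phi(\alpha) = 0$ for every $\alpha \in \ell^{\infty}(M)$. Taking $\alpha = \chi_M$ gives $\psi(\chi_M) = P(\psi(\chi_M)) \in W(X,\ell^{\infty})$, so $\psi(\chi_M)$ is weakly compact. By Gantmacher's theorem the adjoint $\psi(\chi_M)' \colon (\ell^{\infty})' \to X'$ is weakly compact too, and a direct computation gives $\psi(\chi_M)'(e_n') = f_n$ for every $n \in M$, where $e_n' \in \ell^1 \subseteq (\ell^{\infty})'$ is the standard basis vector. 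Hence $\{f_n : n \in M\}$ sits inside the relatively weakly compact set $\psi(\chi_M)'(B_{\ell^1})$ and must admit a weakly convergent subsequence, contradicting the choice of $(f_n)$.

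The main technical nuisance is that Lemma~\ref{lem} requires $X$ to be separable, whereas no such hypothesis is imposed on $X$ in the theorem. I expect to handle this by first passing to a suitable separable subspace $X_0 \subseteq X$ chosen so that the restrictions $(f_n|_{X_0})$ retain the property of having no weakly convergent subsequence in $X_0'$; since the operators $\psi(\alpha)$ only feel the action of $(f_n)$, they descend to $X_0$ naturally, and the projection $P$ transfers to the operator spaces over $X_0$. Carrying out this reduction cleanly while preserving complementability at the operator-space level is the most delicate point of the proof.
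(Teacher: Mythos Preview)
Your argument is essentially the paper's: the same diagonal map $\psi(\alpha)(x)=(\alpha_n f_n(x))$ (the paper calls it $T$), the same rank-one observation for $\psi(e_n)$, and the same appeal to Kalton's Lemma~\ref{lem} applied to $(I-P)\psi$.

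The only substantive difference is in the separable reduction, and here your sketch needs correcting. You propose that ``the projection $P$ transfers to the operator spaces over $X_0$'', but there is no natural way to manufacture a projection $DPcc(X_0,\ell^\infty)\to W(X_0,\ell^\infty)$ out of $P$, and you do not need one. The paper keeps $P$ acting on $DPcc(X,\ell^\infty)$ and simply post-composes with the restriction map $R\colon DPcc(X,\ell^\infty)\to DPcc(X_1,\ell^\infty)$; Lemma~\ref{lem} is then applied to $R\psi - RP\psi\colon \ell^\infty\to L(X_1,\ell^\infty)$, which lands in operators on the separable space $X_1$. The separable subspace itself is produced concretely: since $S(x)=(f_n(x))$ is not weakly compact, choose $(x_n)\subset B_X$ with $(S(x_n))$ admitting no weakly convergent subsequence and set $X_1=\overline{\operatorname{span}}\{x_n\}$; then $S|_{X_1}$ is still not weakly compact, which forces $(f_n|_{X_1})$ to have no weakly convergent subsequence in $X_1'$ and supplies the final contradiction. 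With this adjustment your proof is the paper's proof.
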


\begin{proof}
Consider a subset $A$ of $X^{\prime}$ that is L-Dunford-Pettis but it is not relatively weakly compact. So
there is a sequence $(f_{n})$ in $A$ such that has no weakly convergent subsequence. Hence $S: X\rightarrow\ell^{\infty}$ defined by $S(x)= (f_{n}(x))$ is an DPcc operator but it is not weakly compact. Choose a bounded sequence $(x_{n})$ in $B_{X}$ such that $(S(x_{n}))$ has no weakly convergent subsequence. Let $X_{1}= \left\langle x_{n}\right\rangle$, the closed linear span of the sequence $(x_{n})$ in $X$. It follows that $X_{1}$ is a separable subspace of $X$ such that $S/X_{1}$ is not a weakly compact operator. If $g_{n}= f_{n}/X_{1}$, we have $(g_{n})\subseteq X_{1}^{\prime}$ is bounded and has no weakly convergent subsequence.

Now define the operator $T: \ell^{\infty}\rightarrow DPcc(X, \ell^{\infty})$ by $T(\alpha)(x)= (\alpha_{n}f_{n}(x))$, where $x\in X$ and $\alpha= (\alpha_{n})\in \ell^{\infty}$. Then
\begin{center}
$\left\|T(\alpha)(x)\right\|=\sup_{n}\left|\alpha_{n}f_{n}(x)\right|\leq \left\|\alpha\right\|.\left\|f_{n}\right\|.\left\|x\right\|<{\infty}$.
\end{center}
We claim that $T(\alpha)\in DPcc(X, \ell^{\infty})$ for each $\alpha=(\alpha_{n})\in\ell^{\infty}$.

Let $\alpha=(\alpha_{n})\in\ell^{\infty}$ and let $(x_{m})$ be a weakly null sequence, which is a DP set in $X$. As $(f_{n})$ is L-Dunford-Pettis set $\sup_{n}\left|f_{n}(x_{m})\right|\rightarrow 0$ as $m\rightarrow \infty$. So we have
\begin{center}
$\left\|T(\alpha)(x_{m})\right\|=\sup_{n}\left|\alpha_{n}f_{n}(x_{m})\right|\leq \left\|\alpha\right\|.\sup_{n}\left|f_{n}(x_{m})\right|\rightarrow 0$,
\end{center}
as $m\rightarrow \infty$. Then this finishes the proof that $T$ is a well-defined operator from $\ell^{\infty}$ into $DPcc(X, \ell^{\infty})$.

Let $R: DPcc(X, \ell^{\infty})\rightarrow DPcc(X_{1}, \ell^{\infty})$ be the restriction map and define
\begin{center}
$\phi: \ell^{\infty}\rightarrow DPcc(X_{1},\ell^{\infty})$\quad  {by} \quad  $\phi= RT$.

\end{center}
Now suppose that $W(X, \ell^{\infty})$ is complemented in $DPcc(X, \ell^{\infty})$ and
\begin{center}
$P: DPcc(X, \ell^{\infty})\rightarrow W(X, \ell^{\infty})$
\end{center}
is a projection. Define $\psi: \ell^{\infty}\rightarrow W(X_{1},\ell^{\infty})$ by $\psi= RPT$. Note that as $T(e_{n})$ is a one rank operator, we have $T(e_{n})\in W(X, \ell^{\infty})$. Hence
\begin{center}
$\psi(e_{n})= RPT(e_{n})= RT(e_{n})= \phi(e_{n})$
\end{center}
for all $n\in N$. From Lemma \ref{lem}, there is an infinite set $M\subseteq N$ such that $\psi(\alpha)=\phi(\alpha)$ for all $\alpha\in \ell^{\infty}(M)$. Thus $\phi(\chi_{M})$ is a weakly compact operator. On the other hand, if $e^{\prime}_{n}$ is the usual basis element of $\ell^{1}$, for each $x\in X_{1}$ and each $n\in M$, we have
\begin{center}
$(\phi(\chi_{M}))^{\prime}(e^{\prime}_{n})(x)= f_{n}(x)$.
\end{center}
Therefore $(\phi(\chi_{M}))^{\prime}(e^{\prime}_{n})= f_{n}/X_{1}= g_{n}$ for all $n\in M$. Thus $(\phi(\chi_{M}))^{\prime}$ is not a weakly compact
operator and neither is $\phi(\chi_{M})$. This contradiction ends the proof.
\end{proof}

As a consequence of Theorem \ref{th2} and Theorem \ref{th3}, we obtain the following result.

\begin{corollary}\label{cor1}
Let $X$ be a Banach space. Then the following assertions are equivalent:
\begin{enumerate}
\item X has the L-Dunford-Pettis property,
\item $W(X, \ell^{\infty})= DPcc(X, \ell^{\infty})$,
\item $W(X, \ell^{\infty})$ is complemented in $DPcc(X, \ell^{\infty})$.
\end{enumerate}
\end {corollary}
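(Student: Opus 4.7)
The plan is to prove the three equivalences by setting up the cycle $(1)\Rightarrow(2)\Rightarrow(3)\Rightarrow(1)$, with each implication drawing on an already-established result.

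For $(1)\Rightarrow(2)$, I would start from the fact, recalled in the introduction, that every weakly compact operator is DPcc, which immediately gives $W(X,\ell^{\infty})\subseteq DPcc(X,\ell^{\infty})$. For the reverse inclusion, I would invoke Theorem \ref{th2}: since $X$ has the L-Dunford-Pettis property, every DPcc operator from $X$ into $\ell^{\infty}$ is weakly compact, so $DPcc(X,\ell^{\infty})\subseteq W(X,\ell^{\infty})$. Combining the two inclusions yields equality.

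The implication $(2)\Rightarrow(3)$ is essentially a triviality: if the two operator classes coincide, then $W(X,\ell^{\infty})$ is trivially complemented in $DPcc(X,\ell^{\infty})$ via the identity map, which is a projection onto itself.

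For $(3)\Rightarrow(1)$ I would proceed by contraposition, which is precisely the content of Theorem \ref{th3}: if $X$ fails the L-Dunford-Pettis property, then $W(X,\ell^{\infty})$ is not complemented in $DPcc(X,\ell^{\infty})$. So assuming $(3)$ forces $X$ to have the L-Dunford-Pettis property. There is no real obstacle here, since both Theorems \ref{th2} and \ref{th3} have already been established; the only subtlety is making sure the two inclusions in $(1)\Rightarrow(2)$ are both justified, and in particular citing the fact from the introduction that weak compactness implies the DPcc property so that the chain $W\subseteq DPcc\subseteq W$ closes up.
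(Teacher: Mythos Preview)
Your proof is correct and follows exactly the approach the paper intends: the corollary is stated as an immediate consequence of Theorem~\ref{th2} and Theorem~\ref{th3}, and your cycle $(1)\Rightarrow(2)\Rightarrow(3)\Rightarrow(1)$ is precisely how those two theorems combine.
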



\end{document}